\newcommand{\rightangle}{\mathrel{\text{\tikz \draw[baseline=-2.9ex] (0,0) -- (0,1.2ex) -- (1.2ex,1.2ex);}}}
\journal{Elsevier}
\begin{document}

\begin{frontmatter}

\title{ Novel Approach for solving the discrete Stokes problems based on Augmented Lagrangian and Global Techniques with applications for Stokes problems }

\author[1]{A. Badahmane}
\author[1]{A. Ratnani}
\author[2]{H. Sadok}
\address[1]{The UM6P Vanguard Center, Mohammed VI Polytechnic University, Benguerir 43150, Lot 660, Hay Moulay Rachid, Morocco.}
\address[2]{LMPA, Universit\'e du Littoral C\^ote d'Opale, 50 Rue F. Buisson, BP 699 - 62228 Calais cedex, France.}



\newcommand{\Leg}{{\mathcal L(E,G)}}
\newcommand{\Lef}{{\mathcal L(E,F)}}
\newcommand{\Lfg}{{\mathcal L(F,G)}}
\newcommand{\Le}{{\mathcal L(E)}}
\newcommand{\I}{{\mathcal{I}}}
\newcommand{\ia}{{\mathfrak I}}
\newcommand{\vi}{\emptyset}
\newcommand{\di}{\displaystyle}
\newcommand{\Om}{\Omega}
\newcommand{\na}{\nabla}
\newcommand{\wi}{\widetilde}
\newcommand{\al}{\alpha}
\newcommand{\be}{\beta}
\newcommand{\ga}{\gamma}
\newcommand{\Ga}{\Gamma}
\newcommand{\e}{\epsilon}
\newcommand{\la}{\lambda}
\newcommand{\De}{\Delta}
\newcommand{\de}{\delta}
\newcommand{\entraine}{\Longrightarrow}
\newcommand{\inj}{\hookrightarrow}
\newcommand{\recip}{\Longleftarrow}
\newcommand{\ssi}{\Longleftrightarrow}
\newcommand{\K}{\mathbbm{K}}
\newcommand{\A}{\mathcal{A}}
\newcommand{\R}{\mathbb{R}}
\newcommand{\C}{\mathbb{C}}
\newcommand{\N}{\mathbb{N}}
\newcommand{\Q}{\mathbb{Q}}
\newcommand{\1}{\mathbb{1}}
\newcommand{\0}{\mathbb{0}}
\newcommand{\Z}{\mathbbm{Z}}
\newcommand{\E}{\mathbbm{E}}
\newcommand{\F}{\mathbbm{F}}
\newcommand{\B}{\mathbbm{B}}
\newcommand{\M}{\mathcal{M}_{n}(\K)}

\font\bb=msbm10

\def\ent{{{\rm Z}\mkern-5.5mu{\rm Z}}}
\newtheorem{exo}{Exercice}
\newtheorem{idea}{Idea}

\newtheorem{pre}{Preuve}
\newtheorem{pro}{Propriété}
\newtheorem{exe}{Example}
\newtheorem{theorem}{Theorem}[section]
\newtheorem{proposition}{Proposition}
\newtheorem{definition}{Definition}[section]
\newtheorem{remark}{Remark}[section]
\newtheorem{lem}{Lemma}[section]

\begin{abstract}
In this paper, a novel   augmented Lagrangian preconditioner based on global Arnoldi for  accelerating the convergence of Krylov subspace methods applied to linear systems of equations with a block three-by-three structure, these systems typically arise from  discretizing  the Stokes equations using mixed-finite element methods. In practice, the components of velocity are always approximated using a single finite element space. More precisely, in two dimensions, our new approach based on  standard space of scalar finite element basis functions to discretize the velocity space. This componentwise splitting can be shown to induce  a natural block  three-by-three structure.
Spectral analyses is established for the exact versions of these preconditioners. Finally, the obtained numerical results claim that our  novel approach is more efficient and robust  for solving the  discrete Stokes problems. The efficiency of our new approach is evaluated by measuring computational time.
\end{abstract}
\begin{keyword}
Stokes equation, saddle point problem, Krylov subspace method, global Krylov subspace method, augmented Lagrangian-based preconditioning.
\end{keyword}
\end{frontmatter}

\section{Introduction}\label{sec1}
 The Stokes problem is discretized using conforming finite element spaces \( X^{h} \subset Q_2 \) and \( Q^{h}_{1} \subset Q_1 \) that satisfy the inf-sup condition for the Stokes velocity and pressure such as  Taylor–Hood elements \cite{Elman}.
The discrete form of the weak formulation can be cast as a block linear system of the form:
\begin{eqnarray}
\label{saddle}
\mathcal{A}_{3\times 3}{\mathbf{u} }=\begin{bmatrix}
A & O & B_x^T \\
O & A & B_y^T \\
B_x & B_y & O
\end{bmatrix}
\begin{bmatrix}
u_x \\
u_y \\
p
\end{bmatrix}
=
\underbrace{\begin{bmatrix}
f_x \\
f_y \\
g
\end{bmatrix}}_{b}, 
\end{eqnarray}
assuming that  $n_u=2n$ and $n_p$ are  respectively,  dimensions of velocity solution and  pressure finite-dimensional spaces with $(n_{u}+n_{p}=N)$. 
Where  $A\in\mathbb{R}^{n\times n}$ is the scalar-Laplacian matrix, it is worth nothing  that $A$  is  symmetric positive definite (SPD) matrix, the \( n_p \times n \) matrices \( B_x \) and \( B_y \) represent weak derivatives in the \( x \) and \( y \) directions,
$f_x$, $f_y$ and  $g$ are given vectors. Besides, we assume, as is typically the case in most applications of Stokes problem, $n_{u}>>n_{p}$. The increasing popularity of mixed finite element methods for Stokes and Navier-Stokes flows has been a significant cause of saddle-point systems, such as the one in (\ref{saddle}). A major source of applications for saddle-point problems, can be found in \cite{Benzi2,Elman}. 
In general, inasmuch as the large dimension and sparsity of the matrices $A$ and  $B$,
it is sensible for systems (\ref{saddle}) to be solved by iterative methods. Additionally, since the
coefficient matrix A is nonsingular, in recent years, several eﬀective methods have been developed to
tackle the systems (\ref{saddle}). Such as the successive overrelaxation (SOR)-like methods \cite{Parlett, Wang, Golub2,Guo}, variants of the Uzawa-type methods \cite{Parlett, Wang,Bramble,Zhang},
Hermitian and skew-Hermitian (HSS) method, which was initially  introduced by Bai, Golub, and Ng in \cite{HSS}. Additionally, the  PHSS iteration method has been presented in \cite{PHSS}. For a more in-depth understanding of the works related to the stationary iterative
methods, please refer to references \cite{PHSS,HSS,AHSS}.
Generally speaking, iteration methods become more attractive than direct methods from
two aspects of storage requirements and computing time. In order to solve the linear system (\ref{saddle}) in an efficient manner, we often use valid preconditioning techniques to accelerate
Krylov subspace methods, such as GMRES  method \cite{Saad}. As is well known, a clustered spectrum of preconditioned matrix often results in rapid rate of convergence for Krylov
subspace methods. Therefore, to achieve rapid convergence rate and improve computational
efficiency, a large number of efficient iteration methods and preconditioning techniques have
been presented in recent years, such as block triangular preconditioner applied  to the augmented linear system \cite{beik2022}, augmented Lagrangian-based preconditioning technique for a class of block three-by-three linear systems \cite{benzi2024}, and so forth. We make organizations of this paper as follows. An example of modelling that leads to this
type of system is outlined in Section $1$.
Section $2$ introduces the $3\times3$ strategy. In Section $3$, we recall and define the $2\times 2$ strategy.
Some numerical tests are implemented to show the eﬀectiveness of  the proposed  preconditioners, in particular in the presence of inexact solvers. At the end, we conclude
with a brief summary in Section $5$.
\subsection{The Problem Setting}~\label{problem}
The Stokes equation describes the flow of a viscous fluid and is used in various fields, including aerodynamics, propulsion, and biomedical fluid analysis. In many cases, finding an exact solution to the Stokes equation can be challenging, so we often use numerical methods to approximate the solution~\cite{Elman}. Their discretization results in a linear system, as shown in Eq.~(\ref{saddle}). In the incompressible case, the Stokes equation can be written as follows :
\begin{eqnarray}
\label{eq:Stokes}
\left\{
\begin{array}{r c l}
-\vec{\nabla}^{2} \vec{u} +\vec{\nabla} p &=& \vec{0} \hspace{0.1cm} in  \hspace{0.1cm}\Omega,\\
\vec{\nabla}\cdot \vec{u} &=& 0 \hspace{0.1cm} in \hspace{0.1cm} \Omega.
\end{array}
\right.
\end{eqnarray}
The variable $\vec{u}$ is the unknown velocity field, the scalar function  $p$ is the unknown pressure field. It is important to acknowledge that the Laplacian  and divergence operators are defined in~\cite{Elman}.
The first equation in  Eq.~(\ref{eq:Stokes}) represents conservation of the momentum of the fluid (and so is the momentum equation), and the second equation enforces conservation of mass. We consider the problem posed on a domain $\Omega$ of dimension $d=2$ with boundary conditions $\partial{\Omega}=\partial\Omega_{D}  \cup \partial \Omega_{N}$ defined by
\begin{eqnarray}
\label{Neumann}
\begin{array}{r c l}
\vec{u}&=& \vec{w} \text{ on } \partial\Omega_{D},\hspace{0.2cm}\displaystyle{\frac{\partial\vec{u}}{\partial n}-\vec{n}p}= \vec{s}  \text{ on } \partial\Omega_{N},
\end{array}
\end{eqnarray}
where :
\begin{itemize}
    \item  $\vec{w}$: is the vorticity variable, given by:
\begin{eqnarray}
\label{vor}
\vec{w}&=&\vec{\nabla}\times\vec{u},
\end{eqnarray}
where $\times$ is the curl operator,
\item $\vec{s}$: function depends on  the outflow boundary to ensure that mass is conserved,  
\item $\vec{n}$: the outward-pointing normal to the boundary,
\item $\displaystyle{\frac{\partial\vec{u}}{\partial n}}$: denotes
the directional derivative in the normal direction.
\end{itemize}
Before starting the weak formulation of the Stokes problem Eq.~(\ref{eq:Stokes}), we provide some definitions and reminders:\\
The space of functions that are
square-integrable according to  Lebesgue definition is a set of functions  where the integral of the square of the function over a given interval is finite, and also can, be expressed as follows:
$$L_{2}(\Omega):=\left\{e:\Omega \rightarrow\mathbb{R} \hspace{0.1cm}
 |\int_{\Omega}e^{2}<\infty\right\},$$
if we have a subset $\Omega$ of the two-dimensional Euclidean space $\mathbb{R}^2$, then the Sobolev space $\mathcal{H}^{1}(\Omega)$ can be defined as follows
$$\mathcal{H}^{1}(\Omega):=\left\{e:\Omega \rightarrow  \mathbb{R}\hspace{0.1cm}|e, \frac{\partial e}{\partial x},\frac{\partial e}{\partial y}\in L_{2}(\Omega)\right\}.$$ 
We define the velocity solution and test spaces:
$$H^{1}_{E}:=\left\{\vec{u}\in \mathcal{H}^{1}(\Omega)^{d}\hspace{0.1cm}|  \vec{u}=\vec{w} \hspace{0.1cm} \text{on} \hspace{0.1cm}\partial\Omega_{D}\right\},$$
$$H^{1}_{E_0}:=\left\{\vec{v}\in \mathcal{H}^{1}(\Omega)^{d}\hspace{0.1cm}|  \vec{v}=\vec{0} \hspace{0.1cm} \text{on} \hspace{0.1cm}\partial\Omega_{D}\right\},$$
where $d=2$  is the spatial dimension. The variational formulation of  
$(\ref{eq:Stokes})$,
find  $\vec{u}\in H^{1}_{E}$ and $p\in L_{2}(\Omega)$ such that :
\begin{eqnarray}
\label{var}
\begin{array}{r c l}
\displaystyle{\int_{\Omega }^{}\vec{\nabla} \vec{u}:\vec{\nabla} \vec{v}-\int_{\Omega }^{}p\vec{\nabla}\cdot \vec{v} }&=&\displaystyle{ \int_{\Omega }^{}\vec{s}\cdot  \vec{v}}\hspace{0.2cm}\  \text{for all}\hspace{0.1cm} \vec{v}\in H^{1}_{E_0}, \\
 \displaystyle{\int_{\Omega }^{}q\vec{\nabla}\cdot \vec{u}}&=& 0 \hspace{1.6cm} \text{for all}\hspace{0.1cm} q\in L_{2}(\Omega).
\end{array}
\end{eqnarray}
Here  $\cdot$ is the scalar product and  $\vec{\nabla} \vec{u} : \vec{\nabla} \vec{v}$ represents the component-wise scalar product. For instance, in two dimensions, it can be represented as $\vec{\nabla} u_{x}\cdot\vec{\nabla} v_{x}+\vec{\nabla} u_{y}\cdot\vec{\nabla} v_{y}$. A discrete weak formulation is defined
using   finite dimensional spaces $X^{h}_{0}\subset H^{1}_{E_0}$ and $Q^{h}\subset L_{2}(\Omega)$ are respectively velocity solution   finite $n_{u}$-dimensional space  and pressure finite $n_{p}$-dimensional  space.  Specifically, given a velocity solution space $X^{h}$, the discrete form of $(\ref{var})$  is defined as follows : 
find $\vec{u}_\text{h}\in X^\text{h}$ and $p_\text{h}\in Q^\text{h}$, such that:
 \begin{eqnarray}
 \label{disc_var}
 \begin{array}{r c l}
 \displaystyle{\int_{\Omega }^{}\vec{\nabla}\vec{u}_\text{h}:\vec{\nabla}\vec{v}_\text{h}-\int_{\Omega }^{}p_\text{h}\vec{\nabla}\cdot \vec{v}_\text{h}} &=& \displaystyle{\int_{\Omega }^{}\vec{s}\cdot  \vec{v}_\text{h}}\hspace{0.2cm}\  \text{for all}\hspace{0.1cm} \vec{v}_\text{h}\in X^\text{h}_{0}, \\
 \displaystyle{\int_{\Omega }^{}q_\text{h}\vec{\nabla} \cdot\vec{u}_\text{h}}&=&0 \hspace{1.6cm}\text{for all}\hspace{0.1cm} {q}_\text{h}\in Q^\text{h}. \\ 
 \end{array}
 \end{eqnarray}
To identify the corresponding linear algebra problem  Eq.~(\ref{saddle}), we introduce a set of vector-valued basis functions $\{\vec{\phi}_{j}\}_{j=1,...,n_{u}}$, that represent velocity  and  a set of
scalar pressure basis functions $\{\psi_{k}\}_{k=1,...,n_{p}}$, for more details we refer the reader to see \cite{Elman}, then $\vec{u}_\text{h}$ and $p_\text{h}$
can be expressed as follows:
\begin{eqnarray}
\label{vilos}
\vec{u}_\text{h}=\sum_{j=1}^{n_{u}} \textbf{u}_{j} \vec{\phi_{j}} +\sum_{j=n_{u}+1}^{n_{u}+n_{\partial}} \textbf{u}_{j} \vec{\phi_{j}},  \hspace {0.3cm}p_\text{h}=\sum_{k=1}^{n_{p}} \textbf{p}_{k}\psi_{k},
\end{eqnarray}
and use them to formulate the problem in terms of linear algebra.
The discrete formulation Eq.~(\ref{disc_var}), can be expressed as a system of linear equations.
In practice, the $d$ components of velocity are always approximated using
a single finite element space \cite{Elman}, then the discrete formulation of Eq. 
 (\ref{eq:Stokes}) can be expressed as a two-by-two partitioning of the discrete Stokes system, which the matrix of the system is a saddle point matrix  defined as follows :
\begin{eqnarray}
\label{saddle2}
\mathcal{A}_{2\times2}x=\left(\begin{array}{cc}
A & B^{T} \\B & 0
\end{array} \right)\left(\begin{array}{c}
 \mathrm{u}\\ \mathrm{p}
\end{array} \right)=\underbrace{\left(\begin{array}{c}
\mathrm{f} \\ \mathrm{g}
\end{array} \right)}_{b},
\end{eqnarray}
where  $A_{2\times 2}\in\mathbb{R}^{n_{u}\times n_{u}}$ is the vector-Laplacian matrix, it is worth nothing  that $A$  is  symmetric positive definite (SPD) matrix, $B\in\mathbb{R}^{n_{p}\times n_{u}}$ is divergence matrix  with $\text{rank}(B^{T})=n_{p}$, $f\in\mathbb{R}^{n_{u}}$ and $g\in\mathbb{R}^{n_{p}}$ are given vectors.
where   $A$ and  $B$ are given by 
 \begin{eqnarray}
 A&=&[a_{i,j}],\hspace{0.2cm} a_{i,j}=\int_{\Omega} \vec{\nabla}\vec{\phi_{i}}:\vec{\nabla}\vec{\phi_{j}}, \hspace{0.2cm} i,j=1,..,n_{u}, \\
B&=&[b_{k,j}],\hspace{0.2cm} b_{k,j}=-\int_{\Omega} \psi_{k} \vec{\nabla}\cdot\vec{\phi_{j}}, \hspace{0.2cm} j=1,..,n_{u}, k=1,..,n_{p}.
 \end{eqnarray}
The right-hand side of the discrete Stokes problem can be expressed as follows: 
\begin{eqnarray}
  \mathrm{f}&=&[\mathrm{f}_{i}],\hspace{0.4cm} \mathrm{f}_{i}=\int_{\partial\Omega_{N}} \vec{s} \cdot\vec{\phi_{i}}-\sum_{j=n_{u}+1}^{n_{u}+n_{\partial}}\mathrm{u}_{j}\int_{\Omega} \vec{\nabla}\vec{\phi_{i}}:\vec{\nabla}\vec{\phi_{j}}, \hspace{0.2cm} i=1,..,n_{u}, \\
  \mathrm{g}&=&[\mathrm{g}_{k}],\hspace{0.1cm}  \mathrm{g}_{k}=\sum_{j=n_{u}+1}^{n_{u}+n_{\partial} }\mathrm{u}_{j}\int_{\Omega}\psi_{k}\vec{\nabla}\cdot\vec{\phi_{j}},\hspace{0.1cm}k=1,....,n_{p}.
  \end{eqnarray}
\section*{Motivation:}\label{idea}
The main motivation of this work, instead of using a single finite element space to discretize the velocity space and to obtain the two-by-two partitioning (\ref{saddle2}), we use a standard space of scalar finite element basis functions \(\{\phi_j\}_{j=1}^n\), we set \( n_u = 2n \) and define the velocity basis set
\[
\{\vec{\phi}_1, \ldots, \vec{\phi}_{2n}\} := \{(\phi_1, 0)^T, \ldots, (\phi_n, 0)^T, (0, \phi_1)^T, \ldots, (0, \phi_n)^T\}.
\]
This component-wise splitting can be shown to induce  a natural block three-by-three partitioning of the discrete Stokes system $(\ref{saddle})$, for more details, we refer to \cite{Elman}.
Specifically, with
\[
\textbf{u}:= ([u_x]_1, \ldots, [u_x]_n, [u_y]_1, \ldots, [u_y]_n),
\]
(\ref{saddle2}) can be rewritten as :
\[
\begin{bmatrix}
A & O & B_{x}^T \\
O & A & B_{y}^T \\
B_x & B_y & O
\end{bmatrix}
\begin{bmatrix}
u_x \\
u_y \\
p
\end{bmatrix}
=
\begin{bmatrix}
f_x \\
f_y \\
g
\end{bmatrix},
\]
where the \( n \times n \) matrix \( A \) is the scalar Laplacian matrix (discussed in detail in \cite{Elman}), and the \( n_p \times n \) matrices \( B_x \) and \( B_y \) represent weak derivatives in the \( x \) and \( y \) directions, 
where
 $$A = [a_{ij}], \hspace{0.2cm}a_{ij}=\int_{\Omega}\nabla\phi_{i}\cdot\nabla\phi_{j},$$
 $$B_{x} = [b_{x,kj}], \hspace{0.2cm}b_{x,ki}=-\int_{\Omega}\psi_{k}\frac{\partial\phi_{i}}{\partial x},$$
 $$B_{y} = [b_{y,kj}], \hspace{0.2cm}b_{y,kj}=-\int_{\Omega}\psi_{k}\frac{\partial\phi_{j}}{\partial y},$$
 where  $\{\psi_{k}\}_{k=1,...,n_{p}}$ a set of
scalar pressure basis functions, for more details we refer the reader to see \cite{Elman}.
\section*{Mathematical background:}
Given a square matrix $A$, the set of all eigenvalues (spectrum) of $A$ is denoted by $\sigma(A)$. When the spectrum of $A$ is real, we use $\lambda_{\min}(A)$ and $\lambda_{\max}(A)$ to respectively denote its minimum and maximum eigenvalues. When $A$ is symmetric positive (semi)definite, we write $A \succ 0$ ($A \succeq 0$). In addition, for two given matrices $A$ and $B$, the relation $A \succ B$ ($A \succeq B$) means $A - B \succ 0$ ($A - B \succeq 0$). Finally, for vectors $x$, $y$, and $z$ of dimensions $n$, $m$, and $p$, $(x; y; z)$ will denote a column vector of dimension $n+m+p$.
In this paper, $I$ will denote the identity matrix, specifying its size as appropriate to the context.

\section{$3\times3$  Strategy for solving three-by-three linear system (\ref{saddle})}
The $3\times3$ strategy, based on the motivation outlined in Section $1$, is designed to solve three-by-three saddle-point problem~(\ref{saddle}).
 The $3\times3$  strategy can significantly reduce the computational cost compared  using $2\times 2$  strategy for solving the classical structure of saddle-point problem (\ref{saddle2}). The preconditioning technique helps to improve the convergence rate of the Krylov subspace methods. 
This strategy is motivated by the use of a set of standard scalar finite element basis functions within a defined space, aimed at obtaining the three-by-three partitions of the saddle-point matrix (\ref{saddle}).

\subsection{Novel Augmented Lagrangian-based preconditioning and global
techniques:}
Krylov subspace methods (such as GMRES) in conjunction with suitable preconditioners are frequently the method of choice for computing approximate solutions of such linear systems of equations.
First, problem (\ref{saddle})  is reformulated as the equivalent augmented system $\bar{\mathcal{A}}_{3\times 3} \bar{\mathbf{u} }= \bar{\mathbf{b}}$, where
\begin{equation}
\bar{\mathcal{A}}_{3\times 3}= 
\begin{bmatrix}
A + \gamma B^T_x Q^{-1}B_x & 0 & B_x^T \\
0 & A + \gamma B^T_y Q^{-1}B_y & B^T_y \\
B_x & B_y & 0
\end{bmatrix},
\end{equation}
and $\bar{\mathbf{b}} = ({f}_x+B^T_x Q^{-1}g;{f}_y + \gamma B^T_y Q^{-1}g; g)$, with $Q$ being an arbitrary SPD matrix and $\gamma > 0$ a user-defined parameter. Evidently, the linear system of equations 
$\bar{\mathcal{A}}_{3\times 3} \bar{\mathbf{u} }= \bar{\mathbf{b}}$ is equivalent to ${\mathcal{A}}_{3\times 3}\mathbf{u} = {\mathbf{b}}$. This approach is inspired by the effectiveness of employing grad-div stabilization and augmented Lagrangian techniques to solve saddle-point problems.

\subsubsection{ Preconditioning:}\label{Pia}
In this section, we investigate a new augmented Lagrangian-based preconditioning and global approach for solving (\ref{saddle}).  Left preconditioning of  (\ref{saddle}) gives the following new linear system:
\begin{equation}
\label{aug}
\mathcal{P}^{-1}\bar{\mathcal{A}}_{3\times 3}\bar{\mathbf{u}} = \mathcal{P}^{-1}\bar{\mathbf{b}},
\end{equation}
where  $\mathcal{P}$ is one of the preconditioners  below:
\begin{itemize}
    \item 
$\mathcal{P}_{\gamma, \alpha, x} $ : is the  augmented Lagrangian preconditioner in the $x$ direction.
 \item $\mathcal{P}_{\gamma, \alpha, y} $ : is the  augmented Lagrangian preconditioner in the $y$ direction.
\end{itemize}
 The following two constraint-type preconditioners were proposed for accelerating the convergence of Krylov subspace methods, given as follows:
\begin{eqnarray}
\label{Pgamma}
\mathcal{P}_{\gamma, \alpha,x} = 
\begin{bmatrix}
A + \gamma B^T_x Q^{-1}B_x & 0 & B_x^T \\
0 & A + \gamma B^T_x Q^{-1}B_x & (1 - \gamma \alpha^{-1}) B^T_y \\
0 & 0 & -\alpha^{-1}Q
\end{bmatrix},
\end{eqnarray}

\begin{eqnarray}
\mathcal{P}_{\gamma, \alpha,y} = 
\begin{bmatrix}
A + \gamma B^T_y Q^{-1}B_y & 0 & B_x^T \\
0 & A + \gamma B^T_y Q^{-1}B_y & (1 - \gamma \alpha^{-1}) B^T_y \\
0 & 0 & -\alpha^{-1}Q
\end{bmatrix},
\end{eqnarray}
where $\alpha$ and $\gamma$ are prescribed positive parameters.
\subsubsection{Algorithmic implementation  of the augmented Lagrangian preconditioners  $\mathcal{P}_{\gamma, \alpha,x}$ and $\mathcal{P}_{\gamma, \alpha,y}$.}
In this part, we display the algorithmic implementation of $\mathcal{P}_{\gamma, \alpha,x}$ and $\mathcal{P}_{\gamma, \alpha,y}$,
in which, inside Krylov subspace methods, the SPD subsystems were solved inexactly by the preconditioned conjugate gradient (PCG) method using loose tolerances. More precisely, the inner PCG solver for linear systems with coefficient matrix $A$, $A + \gamma B^{T}_{x}Q^{-1} B_{x}$ and $A + \gamma B^{T}_{y}Q^{-1} B_{y}$ was terminated when the relative residual norm was below $10^{-6}$, with the maximum number of $100$ iterations was reached. The preconditioner for PCG is incomplete Cholesky factorizations constructed using the  function \texttt{ichol(., opts)} where \text{opts.type = 'ict'} with drop tolerance $10^{-2}$. 
In the following parts, we will work on some specific problems. Every step of the Krylov subspace
method such as GMRES method is used in combination with the augmented Lagrangian  preconditioner to solve the  saddle-point
problem $(\ref{saddle})$.
We summarize the implementation of preconditioners 
$\mathcal{P}_{\gamma, \alpha,x}$ and $\mathcal{P}_{\gamma, \alpha,y}$ in
Algorithms $1$ and $2$.
For the linear systems corresponding to $A+ \gamma B^T_{x} Q^{-1} B_{x}$ and $A+ \gamma B^T_{y} Q^{-1} B_{y}$, we distinguish between two approaches:
\begin{itemize}
    \item \textbf{Approach I.} Since $A+ \gamma B^T_{x} Q^{-1} B_{x}$ 
    is  SPD matrix, we solve the linear systems corresponding to this  matrix independently 
by the preconditioned conjugate gradient method (PCG), the matrix is  formed  inside PCG  with incomplete Cholesky preconditioning, ichol(A).

\end{itemize}
As a result, we summarize the implementation of the preconditioners $\mathcal{P}_{\gamma, \alpha,x}$ and $\mathcal{P}_{\gamma, \alpha,y}$ 
in the form of the following algorithms:
 \begin{algorithm}[H]
 \label{algo1}
\begin{algorithmic}

\caption{: Computation of $(x; y;z) = \mathcal{P}_{\gamma, \alpha,x}^{-1} (r_1; r_2; r_3 )$}
  \State  Step 1. Solve  $z=\alpha Q^{-1} r_3$; where $Q$ is a diagonal matrix;
 \State Step 2. Solve $(A+ \gamma B^T_{x} Q^{-1} B_{x})x = r_1-B_{x}^{T}z$ for $x$;
  \State Step 3. Solve $(A+ \gamma B^T_{x} Q^{-1} B_{x})y = r_2-(1-\gamma\alpha^{-1})B_{y}^{T}z$ for $y$.
\end{algorithmic}
\end{algorithm}
\begin{itemize}
\item    The subsystems corresponding to $(A+ \gamma B^T_{x} Q^{-1} B_{x})$ are solved by PCG method. Within the PCG process, we perform sequence of matrix-vector product, first multiplying vectors by $B_{x}$, $Q^{-1}$
and then by $B^{T}_{x}$. 
\end{itemize}
 \begin{algorithm}[H]
\begin{algorithmic}

\label{algo2}
\caption{: Computation of $(x; y;z) = \mathcal{P}_{\gamma, \alpha,y}^{-1} (r_1; r_2; r_3 )$}
  \State  Step 1. Solve  $z=\alpha Q^{-1}r_3$;
 \State Step 2. Solve $(A+ \gamma B^T_{y} Q^{-1} B_{y})x = r_1-B_{x}^{T}z$ for $x$;
  \State Step 3. Solve $(A+ \gamma B^T_{y} Q^{-1} B_{y})y = r_2-(1-\gamma\alpha^{-1})B_{y}^{T}z$ for $y$.
\end{algorithmic}
\end{algorithm}
We use  the steps described  in Algorithm $1$ to implement Algorithm $2$.
\begin{itemize}
    \item \textbf{Approach II.}
    In step $2$ and $3$ of Algorithms $1$ and $2$,
the secondary objective of this work is not to solve it independently, but instead to utilize  $\mathcal{P}$GCG method~\cite{GBICG} for  solving  linear system with several right-hand sides of the following  form :
\begin{eqnarray}
\label{PSMR1}
(A+ \gamma B^T_{y} Q^{-1} B_{y})\mathcal{X}=\mathcal{H},
\end{eqnarray}
where: $\mathcal{X}$ and $\mathcal{H}$  are both  an $n\times2$ matrices.  Each column of matrix $\mathcal{X}$ is denoted as $\mathcal{X}^{(1)}=x$  and 
$\mathcal{X}^{(2)}=y$, each column of matrix $\mathcal{H}$ is denoted as  $\mathcal{H}^{(1)}=r_1-B_{x}^{T}z$ and $\mathcal{H}^{(2)}=r_2-(1-\gamma\alpha^{-1})B_{y}^{T}z$,
$\mathcal{X}_{0}$ is the initial guess of  solution (\ref{PSMR1}) and $R_{0}=\mathcal{H}-(A+ \gamma B^T_{y} Q^{-1} B_{y})\mathcal{X}_{0}$ is the initial residual.
\end{itemize}
By leveraging the structure of the augmented based-Lagrangian preconditioners $\mathcal{P}_{\gamma, \alpha, x}$,
$\mathcal{P}_{\gamma, \alpha, y}$ and the approach II, in the rest of the paper, we refer
to the new preconditioners as  $\mathcal{P}_{\gamma, \alpha,x, G}$ and $\mathcal{P}_{\gamma, \alpha, y, G}$ where
\begin{itemize}
    \item 
$\mathcal{P}_{\gamma, \alpha,G,x} $: denotes the global augmented Lagrangian preconditioner in the $x$ direction.
 \item 
$\mathcal{P}_{\gamma, \alpha,G,y} $: denotes  global augmented Lagrangian preconditioner in the $y$ direction.
\end{itemize}
To implement the preconditioners $\mathcal{P}_{\gamma, \alpha,x,G}$ and $\mathcal{P}_{\gamma, \alpha,y,G}$, we
use the following algorithms:
 \begin{algorithm}[H]
 \label{algo1}
\begin{algorithmic}

\caption{: Computation of $(x; y;z) = \mathcal{P}_{\gamma, \alpha,x,G}^{-1} (r_1; r_2; r_3 )$}
  \State  Step 1. Solve  $z=\alpha Q^{-1}r_3$;
 \State Step 2. Solve $(A+ \gamma B^T_{x} Q^{-1} B_{x})\mathcal{X}=\mathcal{H}$ for $\mathcal{X}$.
\end{algorithmic}
\end{algorithm}
\begin{itemize}
\item   The subsystem with multiple right-hand sides corresponding to $(A + \gamma B^T_{x} Q^{-1} B_{x})$ is solved using the $\mathcal{P}$GCG method. During the $\mathcal{P}$GCG process, we carry out a sequence of matrix-vector multiplications, starting with multiplication by $B_{x}$, followed by $Q^{-1}$, and then by $B^T_{x}$.
\end{itemize}

 \begin{algorithm}[H]
  
\begin{algorithmic}

\label{algo2}
\caption{: Computation of $(x; y;z) = \mathcal{P}_{\gamma, \alpha,y,G}^{-1} (r_1; r_2; r_3 )$}
  \State  Step 1. Solve  $z=\alpha Q^{-1}r_3$;
 \State Step 2. Solve $(A+ \gamma B^T_{y} Q^{-1} B_{y})\mathcal{X}=\mathcal{H}$ for $\mathcal{X}$. 
\end{algorithmic}
\end{algorithm}
We  apply a similar approach as in Algorithm $3$ to implement Algorithm $4$.
\section{$2\times2$  Strategy for solving two-by-two linear system (\ref{saddle2})}
In this strategy, we employ a single finite element space to discretize the velocity field and achieve the two-by-two partitioning (\ref{saddle2}).
\subsection{Novel Augmented Lagrangian-based preconditioning and global
techniques:}
The iterative solution of the discrete Stokes equations has attracted considerable attention in recent years. Here we limit ourselves to discussing solution algorithms based on preconditioned Krylov subspace methods. The augmented Lagrangian preconditioner allows to solve iteratively the the discrete Stokes equation in a very limited number of iterations, regardless of the mesh refinement. In  the following one constraint-type preconditioner were proposed for accelerating the convergence of Krylov subspace methods.
First, problem (\ref{saddle2}) is reformulated as the equivalent augmented system $\bar{\mathcal{A}}_{2\times 2} \mathbf{\bar u} = \bar{\mathbf{b}}$, where
\begin{equation}
\bar{\mathcal{A}}_{2\times 2}= 
\begin{bmatrix}
A + \gamma B^T Q^{-1}B &B^T \\
B& 0
\end{bmatrix},
\end{equation}
and $\bar{\mathbf{b}} = ({f}+B^T Q^{-1}g; g)$, with $Q$ being an arbitrary SPD matrix and $\gamma > 0$ a user-defined parameter. Evidently, the linear system of equations 
$\bar{\mathcal{A}}_{2\times 2} \mathbf{\bar u} = \bar{\mathbf{b}}$ is equivalent to ${\mathcal{A}}_{2\times 2}  \mathbf{u} = {\mathbf{b}}$. The question of whether the grad–div stabilized discrete
solution is closer or further from the continuous weak solution is out of the scope of this paper. However, several
studies  showed that the grad–div stabilization often improves the mass conservation property
and the velocity error of the discrete solution, for adequate values of $\gamma$.
\subsubsection{ Preconditioning:}\label{Pia}
In this section, to solve the linear system of equations (\ref{saddle}), based on the augmented Lagrangian-based preconditioning. 
The idea of preconditioning is to transform the linear system (\ref{saddle}) into another one that is easier to solve. Left preconditioning of  (\ref{saddle}) gives the following new linear system:
\begin{equation}
\label{aug}
\mathcal{P}^{-1}_{\gamma,\alpha}\bar{\mathcal{A}}_{2\times 2}\mathbf{u} = \mathcal{P}^{-1}\bar{\mathbf{b}},
\end{equation}
where  $\mathcal{P}_{\gamma,\alpha}$ is given as follows
\begin{equation}
\mathcal{P}_{\gamma, \alpha}= 
\begin{bmatrix}
A + \gamma B^T Q^{-1}B &(1-\gamma \alpha^{-1})B^T \\
0& \alpha^{-1}Q
\end{bmatrix}.
\end{equation}
To apply the preconditioner, we need to solve systems of the following form:
 \begin{algorithm}[H]
\begin{algorithmic}

\label{algo2x2}
\caption{: Computation of $(x;y) = \mathcal{P}_{\gamma, \alpha}^{-1} (r_1; r_2 )$}
  \State  Step 1. Solve  $y=\alpha Q^{-1}r_2$;
 \State Step 2. Solve $(A+ \gamma B^T Q^{-1} B)x = r_1-(1- \gamma\alpha^{-1}) B^{T}y$.
\end{algorithmic}
\end{algorithm}
\begin{enumerate}
\item  
We compute $y$,
\item 
The matrix  $A+ \gamma B^T Q^{-1} B$ is SPD, we solve it iteratively by PCG method. We address this by employing the PCG method for iterative solution.  the PCG process, we carry out a sequence of matrix-vector multiplications. First, we multiply the vectors by the matrix 
$B$, then by the inverse of Q, and finally by the transpose of 
B (denoted 
 $B^T$).
\end{enumerate}
\subsection{Spectral analysis}
The distribution of eigenvalues and eigenvectors of a preconditioned matrix has a significant connection to how quickly Krylov subspace methods converge. Hence, it's valuable to analyze the spectral characteristics of the preconditioned matrix, denoted as $\mathcal{P}_{\gamma,\alpha}^{-1}\bar{\mathcal{A}}_{2\times 2}$. In the upcoming theorem, we  will estimate the 
 lower and upper bounds for the eigenvalues of preconditioned
matrix $\mathcal{P}_{\gamma,\alpha}^{-1}\bar{\mathcal{A}}_{2\times 2}$. 
\begin{theorem}
\label{theo1}
Let the preconditioner $\mathcal{P}_{\gamma,\alpha}$ be defined as in~(\ref{Pgamma}). Then the eigenvalues of $\mathcal{P}_{\gamma,\alpha}^{-1}\bar{\mathcal{A}}_{2\times 2}$ are all real, positive and bounded. Furthermore
 the matrix $\mathcal{P}_{\gamma,\alpha}^{-1}\bar{\mathcal{A}}_{2\times 2}$ is diagonalizable and has $n_p+1$ distinct eigenvalues 
 $\{1,\lambda_{1},...,\lambda_{n_p}\}$. 
\end{theorem}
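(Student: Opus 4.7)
The plan is to analyze the generalized eigenvalue problem $\bar{\mathcal{A}}_{2\times 2} v = \lambda\, \mathcal{P}_{\gamma,\alpha} v$ with $v = (x;y)$, since the spectrum of $\mathcal{P}_{\gamma,\alpha}^{-1}\bar{\mathcal{A}}_{2\times 2}$ coincides with the set of generalized eigenvalues of the pencil. Writing out the two block equations and setting $W = A + \gamma B^T Q^{-1} B$ and $c = 1 - \gamma \alpha^{-1}$, I obtain
\begin{equation*}
(1-\lambda)\, W x \;=\; (\lambda c - 1)\, B^T y, \qquad B x \;=\; \lambda \alpha^{-1} Q y.
\end{equation*}

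First I would isolate the eigenvalue $\lambda = 1$. Substituting $y=0$ into the first equation makes it trivially satisfied, and the second becomes $Bx=0$. Because $B$ has full row rank $n_p$, $\ker(B)$ is $(n_u-n_p)$-dimensional, which supplies $n_u-n_p$ linearly independent eigenvectors of the form $(x;0)$ attached to $\lambda=1$. This already identifies $\lambda=1$ as a highly multiple eigenvalue and, structurally, as the ``kernel part'' of the spectrum. The remaining eigenvalues necessarily correspond to $y\neq 0$ (and therefore $\lambda\neq 1$), a regime in which the first block equation can be solved as $x = \tfrac{\lambda c - 1}{1-\lambda} W^{-1} B^T y$. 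Feeding this into the second block equation yields the reduced problem
\begin{equation*}
\frac{\lambda c - 1}{1-\lambda}\, B W^{-1} B^T y \;=\; \lambda \alpha^{-1} Q y.
\end{equation*}
Because $B W^{-1} B^T$ is SPD (from $W\succ 0$ and $B$ of full row rank) and $Q$ is SPD, the pencil $(B W^{-1} B^T, Q)$ is simultaneously diagonalizable: it has $n_p$ real positive eigenvalues $\sigma_1,\ldots,\sigma_{n_p}$ and a basis of $Q$-orthogonal eigenvectors $\xi_1,\ldots,\xi_{n_p}$. Substituting $y=\xi_i$ decouples the reduced problem into $n_p$ scalar equations in $\lambda$, each of which I would analyze to extract the eigenvalue $\lambda_i$ associated with $\sigma_i$.

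To finish the proof I would (i) read off the positivity and real-valuedness of each $\lambda_i$ directly from the scalar equation in $\sigma_i,\alpha,\gamma$; (ii) read off explicit two-sided bounds of the form $\lambda_i \in [\underline\lambda(\sigma_{\min},\alpha,\gamma), \overline\lambda(\sigma_{\max},\alpha,\gamma)]$, yielding the boundedness claim; and (iii) construct the eigenvectors $(x_i;\xi_i)$ with $x_i = \tfrac{\lambda_i c-1}{1-\lambda_i} W^{-1}B^T \xi_i$. Combining these with the $n_u-n_p$ kernel eigenvectors for $\lambda=1$ and using the $Q$-orthogonality of $\{\xi_i\}$ to ensure linear independence across blocks, I would exhibit a full eigenbasis of $\mathbb{R}^{n_u+n_p}$, which gives diagonalizability and confirms that $\{1,\lambda_1,\ldots,\lambda_{n_p}\}$ exhausts the distinct spectrum.

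The main obstacle I anticipate is the last step: since $\mathcal{P}_{\gamma,\alpha}^{-1}\bar{\mathcal{A}}_{2\times 2}$ is not symmetric, diagonalizability is not automatic, and one must verify both that the reduced scalar equation in $\lambda$ actually yields a single admissible positive root per $\sigma_i$ (rather than a spurious pair) and that the resulting $(x_i;\xi_i)$ are transverse to the $\lambda=1$ eigenspace. A secondary delicate point is identifying the precise range of the parameters $\alpha,\gamma$ under which the positivity and real-valuedness assertions are uniform in $i$; I expect this to reduce to an elementary discriminant-sign analysis in terms of $\sigma_{\min}$ and $\sigma_{\max}$.
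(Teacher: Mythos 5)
Your handling of the $\lambda=1$ branch is the same as the paper's (eigenvectors $(x;0)$ with $x\in\mathrm{Null}(B)$, giving $n_u-n_p$ of them), but for $\lambda\neq 1$ you take a genuinely different route: you eliminate the velocity and reduce to the pencil $(BW^{-1}B^T,Q)$ on the pressure space, decoupled through its $Q$-orthogonal eigenbasis, whereas the paper eliminates the pressure via $\mathrm{p}=-\tfrac{\alpha}{\lambda}Q^{-1}B\mathrm{u}$, arrives at the quadratic matrix equation $\lambda^{2}(A+\gamma S)\mathrm{u}-\lambda(A+\alpha S)\mathrm{u}+\alpha S\mathrm{u}=0$ with $S=B^{T}Q^{-1}B$, and then passes to Rayleigh quotients to get a scalar quadratic $\lambda^{2}-b\lambda+c=0$ and the bounds. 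Your reduction is the more constructive one: it yields explicit candidate eigenvectors $(x_i;\xi_i)$ and a plausible path to the diagonalizability and eigenvalue-count assertions, which the paper's proof does not actually establish (it stops at the scalar quadratic and the bounds).

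However, the plan as written has concrete gaps. First, a sign: the theorem refers to (\ref{Pgamma}), whose $(2,2)$ block is $-\alpha^{-1}Q$, and the paper's proof correspondingly uses $B\mathrm{u}=-\lambda\alpha^{-1}Q\mathrm{p}$; with your $Bx=\lambda\alpha^{-1}Qy$ the per-mode equation becomes $\lambda^{2}+\bigl((\alpha-\gamma)\sigma_i-1\bigr)\lambda-\alpha\sigma_i=0$, whose roots have product $-\alpha\sigma_i<0$, i.e.\ one negative eigenvalue per pressure mode---the positivity you are trying to prove fails under your convention, so the sign must be corrected. Second, with the correct sign the scalar equation for the mode $\sigma_i$ is $\lambda^{2}-\bigl(1+(\alpha-\gamma)\sigma_i\bigr)\lambda+\alpha\sigma_i=0$, and \emph{both} roots are admissible (neither equals $1$, since $\lambda=1$ would force $\gamma\sigma_i=0$); hence each $\sigma_i$ contributes two eigenvalues, exactly matching the paper's $\lambda_{1,2}=\tfrac{1}{2}\bigl(b\mp\sqrt{b^{2}-4c}\bigr)$, and the expectation of ``a single admissible positive root per $\sigma_i$'' (hence only $n_p$ non-unit eigenvalues) is not what the reduction delivers: counting with multiplicity there are $2n_p$ non-unit eigenvalues, as the total must be $n_u+n_p$. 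Third, realness cannot simply be ``read off'': it requires the discriminant condition $\bigl(1+(\alpha-\gamma)\sigma_i\bigr)^{2}\geq 4\alpha\sigma_i$, which fails for instance when $\alpha=1$, $\sigma_i=1$ and $0<\gamma<4$; moreover a vanishing discriminant (double root) would also cost you the second eigenvector of that mode and hence the diagonalizability argument. You rightly flag the discriminant analysis as the delicate point---note that the paper asserts realness without performing it---but until that analysis (and the corrected sign and root count) is carried out, steps (i)--(iii) of your outline do not yet constitute a proof.
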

\begin{proof}
Assume that $\lambda$ represents an eigenvalue of the preconditioned matrix  and $\bar{\mathrm{u}}=(\mathrm{u};\mathrm{p})$ is the associated eigenvector. In order to deduce the distribution of eigenvalues, we analyze the following generalized eigenvalue problem
	\begin{eqnarray}
	\label{P-1A}
 \bar{\mathcal{A}}_{2\times 2}\bar{\mathrm{u}} =\lambda\mathcal{P}_{\gamma,\alpha}\bar{\mathrm{u}}.
	\end{eqnarray}
	(\ref{P-1A})~can be reformulated  as follows
 \begin{equation}
	\label{RP1-A}
\left\{
\begin{array}{rl}
(1-\lambda)(A+\gamma B^{T}Q^{-1}B)\mathrm{u}+(1+\lambda(\gamma\alpha^{-1}-1))B^{T}\mathrm{p}&=0, \\
B\mathrm{u}&=-\lambda\alpha^{-1} Q\mathrm{p}.
\end{array}
\right.
\end{equation}
In the case where $\lambda=1$, equation~(\ref{RP1-A}) is always true for $\mathrm{u}\in\text{Null}(B)$, consequently, there exist $n_{u}-n_{p}$ linearly independent eigenvectors $\left(\begin{array}{c}
     \mathrm{u}^{(i)};
 0\end{array}\right)$, $i=1,..,n_{u}-n_{p}$, corresponding to the eigenvalue $1$, where $\mathrm{u}^{(i)}\in\text{Null}(B)$.
     If $\lambda=1$ and $\mathrm{u}=0$, from the second equation of (\ref{RP1-A}), it can be deduced that $\mathrm{p}=0$. This conflicts with the initial assumption that the column vector $\left(
     \mathrm{u};
\mathrm{p}\right)$ is an eigenvector of the preconditioned matrix $\mathcal{P}_{\gamma,\alpha}^{-1}\bar{\mathcal{A}}_{2\times 2}$. If $\lambda\neq 1$ and $\mathrm{p}=0$, from the first equation of $(15)$, it can be deduced that  $\mathrm{u}$ must be $0$. This contradicts
the initial assumption that $(\mathrm{u}; \mathrm{p})$ is the eigenvector of the preconditioned matrix and therefore $\mathrm{u}\neq 0$ and
$\mathrm{p}\neq 0$. 
Since \(\lambda \neq 1\), from $~(\ref{RP1-A})$ we further obtain :
\[
\mathrm{p} =-\frac{\alpha}{\lambda}Q^{-1} B \mathrm{u}.
\]
Substituting $\mathrm{p}$ from the above relation  in the first equation of $(\ref{RP1-A})$, we get :
\begin{eqnarray}
\label{eq1}
    \lambda^{2}(A+\gamma B^T Q^{-1} B)\mathrm{u}-\lambda\left(A+\alpha B^T Q^{-1} B \right)\mathrm{u} +\alpha B^T Q^{-1} B\mathrm{u}  = 0.
\end{eqnarray}
Premultiplying (\ref{eq1}) with $\displaystyle\frac{\mathrm{u}^{T}}{\mathrm{u}^{T}\mathrm{u}}$ 
(\ref{eq1}) gives:
\begin{eqnarray}
\label{eq2}
    (a+\gamma q)\lambda^{2}-\left(a+\alpha q \right)\lambda +\alpha q=0,
\end{eqnarray}
which can be written 
\begin{eqnarray}
\label{eq3}
    \lambda^{2}-b\lambda+c=0,
\end{eqnarray}
where \( a \), \( q \), $b$ and $c$ are given  as follows:
$$a = \frac{\mathrm{u}^{T} A \mathrm{u}}{\mathrm{u}^{T} \mathrm{u}},
q=\frac{\mathrm{u}^{T} B^{T} Q^{-1} B \mathrm{u}}{\mathrm{u}^{T} \mathrm{u}}, b=\frac{a+\alpha q}{a+\gamma q} \hspace{0.1cm}and\hspace{0.1cm} c=\frac{\alpha q}{a+\gamma q}.$$
As a result, it is immediate to see that the roots of  (\ref{eq3})  are real and positive, given by
$n_{p}$ eigenvalues 
$\displaystyle\lambda_1 = \frac{b - \sqrt{b^2 - 4c}}{2}$ and $n_{p}$ eigenvalues 
$\displaystyle\lambda_2 = \frac{b + \sqrt{b^2 - 4c}}{2}$ of the preconditioned matrix.
After some manipulations, $\lambda_1$ and $\lambda_2$ must hold the
following inequalities:
$$
\lambda_1  \geq \frac{2\lambda_{min}(B^{T} Q^{-1} B) }{\lambda_{max}(A)+(1+\alpha-\gamma)\lambda_{max}(B^{T} Q^{-1} B)},\hspace{0.1cm}\lambda_2  \leq \frac{2\alpha\lambda_{max}(B^{T} Q^{-1} B) }{\lambda_{min}(A)+(\alpha-\gamma)\lambda_{min}(B^{T} Q^{-1} B)}.$$

\end{proof}

\section{Numerical results}\label{Numerical}
In this section, we report on the performance of inexact variants of the proposed block preconditioner using a test problem taken from \cite{Elman}, which corresponds to a 2D Stokes flow problem. The programs are performed on a computer with an Intel Core i7-10750H CPU @ 2.60 GHz processor and 16.0 GB RAM using MATLAB R2020b. In all the tables, we report the total required number of outer GMRES iterations and elapsed CPU time (in seconds) under ``Iter'' and "CPU", respectively. The total number of inner GMRES (PCG) iterations to solve subsystems with coefficient matrices $(A+ \gamma B^T_{x} Q^{-1} B_{x})$  and $(A+ \gamma B^T_{y} Q^{-1} B_{y})$   are reported under \(\text{Iter}\) (\(\text{Iter}_{\text{pcg}}\)). No restart is used for either  GMRES iteration. The initial guess is taken to be the zero vector and the iterations are stopped as soon as
\[
\| {b}-\mathcal{A} {x}_k  \|_2 \leq 10^{-7} \|{b} \|_2,
\]
where \( x_k \) is the computed \( k \)-th approximate solution. In the tables, we also include the relative error and relative residual
\[
\text{Err} := \frac{\| x_k - x^* \|_2}{\| x^* \|_2},
\]
and 
\[
\text{Res} := 
\frac{\|{b}-\mathcal{A} {x}_k \|_2} {\|{b} \|_2
},
\]
where \( x^* \) and \( x_k \) are respectively, the exact solution and its approximation obtained in the \( k \)-th iterate. In addition, we have used right-hand sides corresponding to random solution vectors.
\begin{exe}
\label{ex3}
$L$-shaped two dimensional domain $\Omega_{\rightangle}$, parabolic inflow boundary condition, natural outflow boundary condition. Consider the  Stokes  equation system~(\ref{eq:Stokes}) posed in  $\Omega_{\rightangle}=\left( -1,5\right)\times\left( -1,1\right)$. In this scenario, we have a situation where there is a slow flow occurring in a rectangular duct with a sudden expansion. This configuration is often referred to as "flow over a backward facing step". Dirichlet no-flow (zero velocity) boundary conditions on uniform streamline imposed in the inflow boundary $(x=-1; 0\leq y \leq 1)$, the Neumann condition~(\ref{Neumann}) is again applied at the outflow boundary  $(x = 5;-1<y< 1)$.
We use $\mathrm{Q}_{2}-\mathrm{P}_{1}$  mixed finite element approximation from  IFISS library~\cite{Elman} to discretize this problem in  $\Omega_{\rightangle}$, where:
\begin{itemize}
    \item  $Q_{2}$: biquadratic finite element approximation on rectangles for the velocity,
    \item $\mathrm{P}_{1}$: triangular finite element approximation on triangle for the pressure,
\end{itemize}
the nodal positions of this mixed finite element  are illustrated in the following Fig.~\ref{nodal}:
\begin{figure}[H]
\tikzstyle{quadri}=[circle,draw,fill=black,text=white]
\tikzstyle{quadri2}=[circle,draw,fill=white,text=black]
\begin{center}
\begin{tikzpicture}
	\draw[->] (0,0) -- (3,0) node[right] {$\mathcal{L}_{1}$};
	\draw[->] (0,0) -- (0,3) node[above] {$\mathcal{L}_{2}$};
\end{tikzpicture}
\hspace{1cm}
\begin{tikzpicture}
\node[quadri] (E) at(0,4) {};
\node[quadri] (A) at(-2,4) {};
\node[quadri] (Z) at(-4,4) {};
\node[quadri] (W) at(-4,1){};
\node[quadri] (P) at(-2,1){};
\node[quadri] (O) at(0,1){};
\node[quadri2] (H) at(-2,3.2){};
\node[quadri2] (K) at(-1.1,1.8){};
\node[quadri2] (Y) at(-2.7,1.8){};
\node[quadri] (X) at(0,2.5){};
\node[quadri] (I) at(-2,2.5){};
\node[quadri] (J) at(-4,2.5){};
\draw[-,=latex] (E)--(Z);
\draw[-,=latex] (Z)--(W);
\draw[-,=latex] (W)--(O);
\draw[-,=latex] (O)--(E);
\end{tikzpicture}
\end{center}
\caption{}\text{$\mathrm{Q}_{2}-\mathrm{P}_{1}$ element  $\left(\begin{tikzpicture}\node[quadri] (P) at (0,0)  {};\end{tikzpicture}\hspace{0.1cm}\text{ velocity node}; \begin{tikzpicture}\node[quadri2] (Q) at (0,0) {};\end{tikzpicture}\hspace{0.1cm} \text{pressure node} \right),\hspace{0.1cm}\text{local co-ordinate}  \left(\mathcal{L}_{1},\mathcal{L}_{2}\right)$.}
\label{nodal}
\end{figure}
Then we obtain the  nonsingular saddle point problem $(\ref{saddle})$.
\end{exe}
The numerical results of strategy $3\times3$ with approaches I and II for the tested example are listed in Tables $1$ and $3$.  
In Tables $2$ and  $4$, we list numerical results with respect to Iter, CPU and Res in the case of $2\times2$  and $3\times3$ strategies.
\section*{In the case $\gamma=1e-04$ and $\alpha=1e+01$.}
\begin{table}[H]
\begin{center}
\begin{minipage}{\textwidth}
\caption{Results for GMRES in conjunction with preconditioners $\mathcal{P}_{\gamma, \alpha, x}$  and $\mathcal{P}_{\gamma, \alpha, x, G}$ }
\begin{tabular*}{\textwidth}{@{\extracolsep{\fill}}lcccccccc@{\extracolsep{\fill}}}
\toprule
 & \multicolumn{4}{@{}c@{}}{Approach I} &   \multicolumn{4}{@{}c@{}}{Approach II} \\
 \hline
 & \multicolumn{4}{@{}c@{}}{$\mathcal{P}_{\gamma, \alpha, x}$} &   \multicolumn{4}{@{}c@{}}{$\mathcal{P}_{\gamma, \alpha, x, G}$} 
 \\ \cmidrule{1-1}\cmidrule{2-5}\cmidrule{6-9} %
&   \multicolumn{3}{@{}c@{}}{GMRES}&  \multicolumn{1}{@{}c@{}}{ Inner iter}&\multicolumn{3}{@{}c@{}}{GMRES}&  \multicolumn{1}{@{}c@{}}{ Inner iter}\\
\midrule
Size   & $\mathrm{Iter}$ (CPU) & Err& Res & $\mathrm{Iter}_{pcg}$ &  $\mathrm{Iter}$ (CPU) & Err& Res & $\mathrm{Iter}_{pcg}$\\   
   \hline
   1926  &25(0.12) & 7.27e-06 & 1.09e-05& 10 &25(0.07)  &  7.39e-06 & 1.36e-05 & 10
   \\
    \hline
    7302  & 25(1.14) & 1.17e-05  & 1.09e-05& 18 &25(0.62) & 1.17e-05  & 1.09e-05& 18 \\
      \hline
     28420&25(4.70) & 1.19e-05   & 7.93e-06&34&25(2.97) & 1.19e-05   & 7.93e-06 & 34\\
     \hline
\end{tabular*}
\label{Table:2}
\end{minipage}
\end{center}
\end{table}
 It can be seen numerically that the Approach II incorporated with  $\mathcal{P}_{\gamma, \alpha,x,G}$ and $\mathcal{P}_{\gamma, \alpha,y,G}$
preconditioners 
is  more convenient than the Approach I  
incorporated with  $\mathcal{P}_{\gamma, \alpha}$
in terms of
both iteration number and CPU time. As for the number of inner PCG iterations, we observe some differences in the results obtained with Approaches I and II. In the case of Approaches I and II we see an increase in the total number of inner PCG iterations as the size of the problem is increased. With Approach I, the total timings are much higher than that of Approach II. This is due to the fact that solving the two linear systems, with same coefficient matrix $(A+ \gamma B^T_{x} Q^{-1} B_{x})$ or  $(A+ \gamma B^T_{y} Q^{-1} B_{y})$
 leads to a considerably higher CPU time  sparse matrix and also  computing an incomplete Cholesky factorization ichol(A,opts) leads to a considerably expensive PCG iterations.  We conclude that with $\mathcal{P}_{\gamma, \alpha,x,G}$, Approach II is to be preferred to Approach I.
\begin{table}[H]
\begin{center}
\begin{minipage}{\textwidth}
\caption{Results for $2\times2$  and $3\times 3$  strategies in conjunction with preconditioners  $\mathcal{P}_{\gamma, \alpha}$ and $\mathcal{P}_{\gamma, \alpha, x, G}$}
\begin{tabular*}{\textwidth}{@{\extracolsep{\fill}}lcccccccc@{\extracolsep{\fill}}}
\toprule
 & \multicolumn{4}{@{}c@{}}{$2\times2$ 
 Strategy} &   \multicolumn{4}{@{}c@{}}{$3\times3$ 
 Strategy} \\
 \hline
 & \multicolumn{4}{@{}c@{}}{$\mathcal{P}_{\gamma, \alpha}$} &   \multicolumn{4}{@{}c@{}}{$\mathcal{P}_{\gamma, \alpha, x, G}$} 
 \\ \cmidrule{1-1}\cmidrule{2-5}\cmidrule{6-9} %
&   \multicolumn{3}{@{}c@{}}{GMRES}&  \multicolumn{1}{@{}c@{}}{ Inner iter}&\multicolumn{3}{@{}c@{}}{GMRES}&  \multicolumn{1}{@{}c@{}}{ Inner iter}\\
\midrule
Size   & $\mathrm{Iter}$ (CPU) & Err& Res & $\mathrm{Iter}_{pcg}$ &  $\mathrm{Iter}$ (CPU) & Err& Res & $\mathrm{Iter}_{pcg}$\\   
   \hline
   1926  &25(0.09) & 4.15e-04  & 2.03e-04 & 10 &25(0.08) & 7.96e-05  & 1.29e-04& 10
   \\
    \hline
     7302  & 25(0.72) & 1.49e-03 & 3.94e-04  & 18 &25(0.61) & 1.17e-05  & 1.09e-05& 18 \\
      \hline
     28420 &25(6.16) & 4.55e-03   &  6.66e-04&34&25(2.95) & 1.19e-05   & 7.93e-06 & 34\\
     \hline
\end{tabular*}
\label{Table:2}
\end{minipage}
\end{center}
\end{table}
Table $2$ reports the corresponding results of the two strategies with the proposed preconditioners, which show that the  $3\times3$ strategy with $\mathcal{P}_{\gamma, \alpha,x,G}$ perform much better than the $2\times2$ strategy with $\mathcal{P}_{\gamma, \alpha}$, especially for the large
problems. Numerical results are reported in Tables $3$ for the tested methods
with respect to the number of outer iteration steps, inner iteration steps and elapsed CPU time in seconds, denoted
as "Iter", "$Iter_{pcg}$" and "CPU", respectively.
\begin{table}[H]
\begin{center}
\begin{minipage}{\textwidth}
\caption{Results for GMRES in conjunction with preconditioners $\mathcal{P}_{\gamma, \alpha, y}$ and $\mathcal{P}_{\gamma, \alpha, y, G}$}
\begin{tabular*}{\textwidth}{@{\extracolsep{\fill}}lcccccccc@{\extracolsep{\fill}}}
\toprule
 & \multicolumn{4}{@{}c@{}}{Approach I} &   \multicolumn{4}{@{}c@{}}{Approach II} \\
 \hline
 & \multicolumn{4}{@{}c@{}}{$\mathcal{P}_{\gamma, \alpha, y}$} &   \multicolumn{4}{@{}c@{}}{$\mathcal{P}_{\gamma, \alpha, y, G}$} 
 \\ \cmidrule{1-1}\cmidrule{2-5}\cmidrule{6-9} %
 &   \multicolumn{3}{@{}c@{}}{GMRES}&  \multicolumn{1}{@{}c@{}}{ Inner iter}&\multicolumn{3}{@{}c@{}}{GMRES}&  \multicolumn{1}{@{}c@{}}{ Inner iter}\\
\midrule
  Size & $\mathrm{Iter}$ (CPU) & Err& Res & $\mathrm{Iter}_{pcg}$ &  $\mathrm{Iter}$ (CPU) & Err& Res & $\mathrm{Iter}_{pcg}$\\   
   \hline
   1926  &25(0.19) & 7.27e-06 & 1.09e-05& 10 &25(0.08) & 7.96e-05  & 1.29e-04& 10
   \\
    \hline
     7302  & 25(1.11) & 1.17e-05  & 1.09e-05& 18 &25(0.61) & 1.17e-05  & 1.09e-05& 18 \\
      \hline
     28420&25(6.69) & 1.19e-05   & 7.93e-06&34&25(2.95) & 1.19e-05   & 7.93e-06 & 34\\
     \hline
\end{tabular*}
\label{Table:2}
\end{minipage}
\end{center}
\end{table}
The Approach II incorporated with  $\mathcal{P}_{\gamma, \alpha, y, G}$ outperforms the Approach I with  $\mathcal{P}_{\gamma, \alpha, y}$ on efficiency and performance
concerning both iteration steps and CPU times. Moreover, the Approach II incorporated with  $\mathcal{P}_{\gamma, \alpha, y}$
preconditioner is more economical and it is superior to the other two preconditioners regarding
execution time, especially for relatively large size problems.
\begin{table}[H]
\begin{center}
\begin{minipage}{\textwidth}
\caption{Results for  2x2  and 3x3 approaches in conjunction with preconditioners  $\mathcal{P}_{\gamma, \alpha}$ and $\mathcal{P}_{\gamma, \alpha, y, G}$}
\begin{tabular*}{\textwidth}{@{\extracolsep{\fill}}lcccccccc@{\extracolsep{\fill}}}
\toprule
 & \multicolumn{4}{@{}c@{}}{$2\times2$  Strategy} &   \multicolumn{4}{@{}c@{}}{$3\times3$ 
 Strategy} \\
 \hline
 & \multicolumn{4}{@{}c@{}}{$\mathcal{P}_{\gamma, \alpha}$} &   \multicolumn{4}{@{}c@{}}{$\mathcal{P}_{\gamma, \alpha, y, G}$} 
 \\ \cmidrule{1-1}\cmidrule{2-5}\cmidrule{6-9} %
&   \multicolumn{3}{@{}c@{}}{GMRES}&  \multicolumn{1}{@{}c@{}}{ Inner iter}&\multicolumn{3}{@{}c@{}}{GMRES}&  \multicolumn{1}{@{}c@{}}{ Inner iter}\\
\midrule
Size   & $\mathrm{Iter}$ (CPU) & Err& Res & $\mathrm{Iter}_{pcg}$ &  $\mathrm{Iter}$ (CPU) & Err& Res & $\mathrm{Iter}_{pcg}$\\   
   \hline
   1926  &25(0.09) & 4.15e-04 & 2.03e-04& 10 &25(0.08) & 7.96e-05  & 1.29e-04& 10
   \\
    \hline
     7302  & 25(0.72) & 1.49e-03  & 3.49e-04& 18 &25(0.61) & 1.17e-05  & 1.09e-05& 18 \\
      \hline
     28420&25(6.16) & 4.55e-03   & 6.66e-04&34&25(2.95) & 1.19e-05   & 7.93e-06 & 34\\
     \hline
\end{tabular*}
\label{Table:2}
\end{minipage}
\end{center}
\end{table}
\large\textbf{In the case $\gamma=1e-02$ and $\alpha=1e+01$.}
\begin{table}[H]
\begin{center}
\begin{minipage}{\textwidth}
\caption{Results for GMRES in conjunction with preconditioners $\mathcal{P}_{\gamma, \alpha, x}$ and $\mathcal{P}_{\gamma, \alpha, x, G}$}
\begin{tabular*}{\textwidth}{@{\extracolsep{\fill}}lcccccccc@{\extracolsep{\fill}}}
\toprule
 & \multicolumn{4}{@{}c@{}}{Approach I} &   \multicolumn{4}{@{}c@{}}{Approach II} \\
 \hline
& \multicolumn{4}{@{}c@{}}{$\mathcal{P}_{\gamma, \alpha, x}$} &   \multicolumn{4}{@{}c@{}}{$\mathcal{P}_{\gamma, \alpha, x, G}$} 
 \\ \cmidrule{1-1}\cmidrule{2-5}\cmidrule{6-9} %
&   \multicolumn{3}{@{}c@{}}{GMRES}&  \multicolumn{1}{@{}c@{}}{ Inner iter}&\multicolumn{3}{@{}c@{}}{GMRES}&  \multicolumn{1}{@{}c@{}}{ Inner iter}\\
\midrule
Size   & $\mathrm{Iter}$ (CPU) & Err& Res & $\mathrm{Iter}_{pcg}$ &  $\mathrm{Iter}$ (CPU) & Err& Res & $\mathrm{Iter}_{pcg}$\\   
   \hline
   1926  &29(0.13) &  7.51e-06 & 1.79e-04& 9 &29(0.09)  & 7.83e-06 & 1.71e-04 & 9
   \\
    \hline
     7302  & 31(0.84) & 2.40e-05  & 4.26e-04  & 18 &31(0.63) &  2.46e-05   & 4.21e-04 & 18 \\
      \hline
       28420&31(5.13) & 2.82e-05  &4.28e-04 &31&31(3.64) & 2.89e-05   & 4.01e-04  & 31\\
     \hline
\end{tabular*}
\label{Table:2}
\end{minipage}
\end{center}
\end{table}
\begin{table}[H]
\begin{center}
\begin{minipage}{\textwidth}
\caption{Results for  2x2  and 3x3 strategies in conjunction with preconditioners  $\mathcal{P}_{\gamma, \alpha}$ and $\mathcal{P}_{\gamma, \alpha, x, G}$}
\begin{tabular*}{\textwidth}{@{\extracolsep{\fill}}lcccccccc@{\extracolsep{\fill}}}
\toprule
 & \multicolumn{4}{@{}c@{}}{$2\times2$ 
 Strategy} &   \multicolumn{4}{@{}c@{}}{$3\times3$ 
 Strategy} \\
 \hline
 & \multicolumn{4}{@{}c@{}}{$\mathcal{P}_{\gamma, \alpha}$} &   \multicolumn{4}{@{}c@{}}{$\mathcal{P}_{\gamma, \alpha, x, G}$} 
 \\ \cmidrule{1-1}\cmidrule{2-5}\cmidrule{6-9} %
&   \multicolumn{3}{@{}c@{}}{GMRES}&  \multicolumn{1}{@{}c@{}}{ Inner iter}&\multicolumn{3}{@{}c@{}}{GMRES}&  \multicolumn{1}{@{}c@{}}{ Inner iter}\\
\midrule
Size   & $\mathrm{Iter}$ (CPU) & Err& Res & $\mathrm{Iter}_{pcg}$ &  $\mathrm{Iter}$ (CPU) & Err& Res & $\mathrm{Iter}_{pcg}$\\   
   \hline
   1926  &25(0.19) & 7.27e-06 & 1.09e-05& 10 &29(0.08) & 7.96e-05  & 1.29e-04& 10
   \\
    \hline
     7302  & 25(1.11) & 1.17e-05  & 1.09e-05& 18 &31(0.61) & 1.17e-05  & 1.09e-05& 18 \\
      \hline
     28420&25(6.69) & 1.19e-05   & 7.93e-06&34&31(3.64) & 1.19e-05   & 7.93e-06 & 34\\
     \hline
\end{tabular*}
\label{Table:2}
\end{minipage}
\end{center}
\end{table}

\begin{table}[H]
\begin{center}
\begin{minipage}{\textwidth}
\caption{Results for GMRES in conjunction with preconditioners $\mathcal{P}_{\gamma, \alpha, y}$ and $\mathcal{P}_{\gamma, \alpha, y, G}$}
\begin{tabular*}{\textwidth}{@{\extracolsep{\fill}}lcccccccc@{\extracolsep{\fill}}}
\toprule
 & \multicolumn{4}{@{}c@{}}{Approach I} &   \multicolumn{4}{@{}c@{}}{Approach II} \\
 \hline
 & \multicolumn{4}{@{}c@{}}{$\mathcal{P}_{\gamma, \alpha, y}$} &   \multicolumn{4}{@{}c@{}}{$\mathcal{P}_{\gamma, \alpha, y, G}$} 
 \\ \cmidrule{1-1}\cmidrule{2-5}\cmidrule{6-9} %
&   \multicolumn{3}{@{}c@{}}{GMRES}&  \multicolumn{1}{@{}c@{}}{ Inner iter}&\multicolumn{3}{@{}c@{}}{GMRES}&  \multicolumn{1}{@{}c@{}}{ Inner iter}\\
\midrule
Size   & $\mathrm{Iter}$ (CPU) & Err& Res & $\mathrm{Iter}_{pcg}$ &  $\mathrm{Iter}$ (CPU) & Err& Res & $\mathrm{Iter}_{pcg}$\\   
   \hline
   1926  &29(0.10) &  7.98e-06   & 1.85e-04  & 9 &29(0.07) & 7.64e-05 &  1.28e-04 & 9
   \\
    \hline
     7302  & 31(0.78) &  2.59e-05   & 4.68e-04 & 18 &31(0.57) & 2.67e-05 & 4.71e-04& 18 \\
      \hline
     28420&31(5.45) & 5.30e-05  &1.32e-03 &31&31(3.45) & 3.99e-05   & 1.12e-03  & 31\\
     \hline
\end{tabular*}
\label{Table:2}
\end{minipage}
\end{center}
\end{table}
\begin{table}[H]
\begin{center}
\begin{minipage}{\textwidth}
\caption{Results for  2x2  and 3x3 strategies in conjunction with preconditioners  $\mathcal{P}_{\gamma, \alpha}$ and $\mathcal{P}_{\gamma, \alpha, y, G}$}
\begin{tabular*}{\textwidth}{@{\extracolsep{\fill}}lcccccccc@{\extracolsep{\fill}}}
\toprule

 & \multicolumn{4}{@{}c@{}}{$2\times2$  Strategy} &   \multicolumn{4}{@{}c@{}}{$3\times3$ 
 Strategy} \\
 \hline
 & \multicolumn{4}{@{}c@{}}{$\mathcal{P}_{\gamma, \alpha}$} &   \multicolumn{4}{@{}c@{}}{$\mathcal{P}_{\gamma, \alpha, y, G}$} 
 \\ \cmidrule{1-1}\cmidrule{2-5}\cmidrule{6-9} %
&   \multicolumn{3}{@{}c@{}}{GMRES}&  \multicolumn{1}{@{}c@{}}{ Inner iter}&\multicolumn{3}{@{}c@{}}{GMRES}&  \multicolumn{1}{@{}c@{}}{ Inner iter}\\
\midrule
Size   & $\mathrm{Iter}$ (CPU) & Err& Res & $\mathrm{Iter}_{pcg}$ &  $\mathrm{Iter}$ (CPU) & Err& Res & $\mathrm{Iter}_{pcg}$\\   
   \hline
   1926  &29(0.19) & 7.27e-06 & 1.09e-05& 10 &29(0.08) & 7.96e-05  & 1.29e-04& 10
   \\
    \hline
     7302  & 31(1.11) & 1.17e-05  & 1.09e-05& 18 &31(0.61) & 1.17e-05  & 1.09e-05& 18 \\
      \hline
     28420&25(6.69) & 1.19e-05   & 7.93e-06&34&31(3.45) & 1.19e-05   & 7.93e-06 & 34\\
     \hline
\end{tabular*}
\label{Table:2}
\end{minipage}
\end{center}
\end{table}
It was observed in all the Tables  that $2\times 2$ and $3\times 3$  strategies with the inexact augmented Lagrangian-based preconditioner  exhibits faster convergence for smaller  values of $\gamma$. However, for large $\gamma$ the total timings increase due to the fact that the condition number of the blocks $(A+ \gamma B^T_{y} Q^{-1} B_{y})$ and $(A+ \gamma B^T_{x} Q^{-1} B_{x})$ goes up as increase. The $3\times 3$  strategy  incorporated with {$\mathcal{P}_{\gamma, \alpha,x,G}$} and {$\mathcal{P}_{\gamma, \alpha,y,G}$}  preconditioners
 demonstrates significantly better performance.
This superiority is observed across various comparisons with $2\times 2$  strategy incorporated with {$\mathcal{P}_{\gamma, \alpha}$}. Moreover, $3\times 3$  strategy consistently requires less CPU time for convergence. Therefore, it can be concluded that the convergence behavior of $3\times 3$ strategy with {$\mathcal{P}_{\gamma, \alpha,x,G}$} and  {$\mathcal{P}_{\gamma, \alpha,y,G}$} outperforms that of other methods. From the tables above, experimentally observed that the performance of  the preconditioners is  sensitive to $\gamma$ when increase and $\alpha$ decrease. in Tables $6$, $7$ and $8$, it is seen that for $\gamma=1e-02$ and $\alpha=1e+01$, the outer iteration count for GMRES remains essentially constant as the problem size is  increased.  The number of inner  iterations increases drastically for the largest problem size, and this because that the matrix $(A+ \gamma B^T_{x} Q^{-1} B_{x})$ becomes ill-conditioned, for largest values of $\gamma$ and small values of $\alpha$.
\begin{exe}
To discretize problem~(\ref{eq:Stokes})  using  Taylor-Hood $\mathrm{Q}_{2}-\mathrm{Q}_{1}$  mixed-finite element approximation in  $\Omega_{\rightangle}$, we utilize the nodal positions of $\mathrm{Q}_{2}-\mathrm{Q}_{1}$  from  IFISS library~\cite{Elman}, where:
\begin{itemize}
    \item $\mathrm{Q}_{1}$: denotes a quadratic finite element approximation on rectangle,
\end{itemize}
and  the nodal positions of $\mathrm{Q}_{2}-\mathrm{Q}_{1}$  are given below in the following Fig.~\ref{nodal2}:
\begin{figure}[H]
\tikzstyle{quadri}=[circle,draw,fill=black,text=black]
\tikzstyle{quadri2}=[circle,draw,fill=white,text=black]
\begin{center}
\begin{tikzpicture}
	\draw[->] (0,0) -- (3,0) node[right] {$\mathcal{L}_{1}$};
	\draw[->] (0,0) -- (0,3) node[above] {$\mathcal{L}_{2}$};
\end{tikzpicture}
\hspace{1cm}
\begin{tikzpicture}
\node[quadri] (E) at(0,4) {};
\node[quadri] (A) at(-2,4) {};
\node[quadri] (Z) at(-4,4) {};
\node[quadri] (W) at(-4,1){ };
\node[quadri] (P) at(-2,1){ };
\node[quadri] (O) at(0,1){};
\node[quadri] (X) at(0,2.5){};
\node[quadri] (I) at(-2,2.5){};
\node[quadri] (J) at(-4,2.5){};
\draw[-,=latex] (E)--(Z);
\draw[-,=latex] (Z)--(W);
\draw[-,=latex] (W)--(O);
\draw[-,=latex] (O)--(E);
\end{tikzpicture}
\begin{tikzpicture}
\node[quadri2] (E) at(0,4) {};
\node[quadri2] (Z) at(-4,4) {};
\node[quadri2] (W) at(-4,1){ };
\node[quadri2] (O) at(0,1){};
\draw[-,=latex] (E)--(Z);
\draw[-,=latex] (Z)--(W);
\draw[-,=latex] (W)--(O);
\draw[-,=latex] (O)--(E);
\end{tikzpicture}
\end{center}
\caption{}\text{$\mathrm{Q}_{2}-\mathrm{Q}_{1}$ element $\left(\begin{tikzpicture}\node[quadri] (P) at (0,0) {};\end{tikzpicture}\hspace{0.1cm}\text{velocity node}; \begin{tikzpicture}\node[quadri2] (Q) at (0,0) {};\end{tikzpicture}\hspace{0.1cm} \text{pressure node}\right) \hspace{0.1cm},\hspace{0.1cm}\text{local co-ordinate}  \left(\mathcal{L}_{1},\mathcal{L}_{2}\right).$}
\label{nodal2}
\end{figure}
Then we derive the nonsingular saddle point problem $(\ref{saddle})$. 
\end{exe}
To further confirm the effectiveness of the $3\times 3$ strategy incorporated with  $\mathcal{P}_{\gamma, \alpha, x, G}$ or  $\mathcal{P}_{\gamma, \alpha, y, G}$ preconditioners, numerical results of the $2\times 2$ and  $3\times 3$ strategies incorporated with  various preconditioners, 
with respect to Iter, $Iter_{pcg}$, CPU, Res and  Err for saddle point problems with different values of $l$,
are reported in the following Tables. 
\section*{In the case $\gamma=1e-04$ and $\alpha=1e+01$.}
\begin{table}[H]
\begin{center}
\begin{minipage}{\textwidth}
\caption{Results for GMRES in conjunction with preconditioners $\mathcal{P}_{\gamma, \alpha, x}$ and $\mathcal{P}_{\gamma, \alpha, x, G}$}
\begin{tabular*}{\textwidth}{@{\extracolsep{\fill}}lcccccccc@{\extracolsep{\fill}}}
\toprule
 & \multicolumn{4}{@{}c@{}}{Approach I} &   \multicolumn{4}{@{}c@{}}{Approach II} \\
 \hline
 & \multicolumn{4}{@{}c@{}}{$\mathcal{P}_{\gamma, \alpha, x}$} &   \multicolumn{4}{@{}c@{}}{$\mathcal{P}_{\gamma, \alpha, x, G}$} 
 \\ \cmidrule{1-1}\cmidrule{2-5}\cmidrule{6-9} %
&   \multicolumn{3}{@{}c@{}}{GMRES}&  \multicolumn{1}{@{}c@{}}{ Inner iter}&\multicolumn{3}{@{}c@{}}{GMRES}&  \multicolumn{1}{@{}c@{}}{ Inner iter}\\
\midrule
Size   & $\mathrm{Iter}$ (CPU) & Err& Res & $\mathrm{Iter}_{pcg}$ &  $\mathrm{Iter}$ (CPU) & Err& Res & $\mathrm{Iter}_{pcg}$\\   
   \hline
   1926  &28(0.12) & 7.27e-06 & 1.09e-05& 10 &28(0.07)  &  7.39e-06 & 1.36e-05 & 10
   \\
    \hline
     7302  & 30(0.87) & 1.17e-05  & 1.09e-05& 18 &30(0.70) & 1.17e-05  & 1.09e-05& 18 \\
      \hline
     28420&31(5.62) & 2.18e-05   & 7.98e-05 &34&31(3.79) & 2.17e-05  & 7.97e-05 & 34\\
     \hline
\end{tabular*}
\label{Table:2}
\end{minipage}
\end{center}
\end{table}

\begin{table}[H]
\begin{center}
\begin{minipage}{\textwidth}
\caption{Results for  2x2  and 3x3 strategies in conjunction with preconditioners  $\mathcal{P}_{\gamma, \alpha}$ and $\mathcal{P}_{\gamma, \alpha, x, G}$}
\begin{tabular*}{\textwidth}{@{\extracolsep{\fill}}lcccccccc@{\extracolsep{\fill}}}
\toprule
 & \multicolumn{4}{@{}c@{}}{$2\times2$ Strategy} &   \multicolumn{4}{@{}c@{}}{$3\times3$  Strategy} \\
 \hline
 & \multicolumn{4}{@{}c@{}}{$\mathcal{P}_{\gamma, \alpha}$} &   \multicolumn{4}{@{}c@{}}{$\mathcal{P}_{\gamma, \alpha, x, G}$} 
 \\ \cmidrule{1-1}\cmidrule{2-5}\cmidrule{6-9} %
&   \multicolumn{3}{@{}c@{}}{GMRES}&  \multicolumn{1}{@{}c@{}}{ Inner iter}&\multicolumn{3}{@{}c@{}}{GMRES}&  \multicolumn{1}{@{}c@{}}{ Inner iter}\\
\midrule
  Size & $\mathrm{Iter}$ (CPU) & Err& Res & $\mathrm{Iter}_{pcg}$ &  $\mathrm{Iter}$ (CPU) & Err& Res & $\mathrm{Iter}_{pcg}$\\   
   \hline
   1926  &28(0.12) &6.27e-04  & 1.71e-04 & 10 &28(0.08) & 7.96e-05  & 1.29e-04& 10
   \\
    \hline
     7302  & 30(1.01) & 1.54e-03  & 3.23e-04 & 18 &30(0.61) & 1.17e-05  & 1.09e-05& 18 \\
      \hline
     28420&31(7.88) & 5.66e-03     & 6.13e-04 &34&31(2.95) & 1.19e-05   & 7.93e-06 & 34\\
     \hline
\end{tabular*}
\label{Table:2}
\end{minipage}
\end{center}
\end{table}

\begin{table}[H]
\begin{center}
\begin{minipage}{\textwidth}
\caption{Results for GMRES in conjunction wit hpreconditioners $\mathcal{P}_{\gamma, \alpha, y}$ and $\mathcal{P}_{\gamma, \alpha, y, G}$}
\begin{tabular*}{\textwidth}{@{\extracolsep{\fill}}lcccccccc@{\extracolsep{\fill}}}
\toprule
 & \multicolumn{4}{@{}c@{}}{Approach I} &   \multicolumn{4}{@{}c@{}}{Approach II} \\
 \hline
 & \multicolumn{4}{@{}c@{}}{$\mathcal{P}_{\gamma, \alpha, y}$} &   \multicolumn{4}{@{}c@{}}{$\mathcal{P}_{\gamma, \alpha, y, G}$} 
 \\ \cmidrule{1-1}\cmidrule{2-5}\cmidrule{6-9} %
&   \multicolumn{3}{@{}c@{}}{GMRES}&  \multicolumn{1}{@{}c@{}}{ Inner iter}&\multicolumn{3}{@{}c@{}}{GMRES}&  \multicolumn{1}{@{}c@{}}{ Inner iter}\\
\midrule
Size   & $\mathrm{Iter}$ (CPU) & Err& Res & $\mathrm{Iter}_{pcg}$ &  $\mathrm{Iter}$ (CPU) & Err& Res & $\mathrm{Iter}_{pcg}$\\   
   \hline
   1926  &28(0.12) & 2.58e-05  &   4.16e-05  & 10 &28(0.07)  &  2.58e-05& 4.14e-05& 10
   \\
    \hline
     7302  & 30(0.89) & 2.34e-05   & 6.44e-05 & 18 &30(0.70) & 1.17e-05  & 1.09e-05& 18 \\
      \hline
     28420&31(6.21) & 2.18e-05   & 7.98e-05&34&31(4.05) & 2.16e-05  & 7.95e-05 & 34\\
     \hline
\end{tabular*}
\label{Table:2}
\end{minipage}
\end{center}
\end{table}
\begin{table}[H]
\begin{center}
\begin{minipage}{\textwidth}
\caption{Results for  2x2  and 3x3  strategies in conjunction with preconditioners  $\mathcal{P}_{\gamma, \alpha}$ and $\mathcal{P}_{\gamma, \alpha, y, G}$}
\begin{tabular*}{\textwidth}{@{\extracolsep{\fill}}lcccccccc@{\extracolsep{\fill}}}
\toprule
 & \multicolumn{4}{@{}c@{}}{$2\times2$  Strategy} &   \multicolumn{4}{@{}c@{}}{$3\times3$  Strategy} \\
 \hline
 & \multicolumn{4}{@{}c@{}}{$\mathcal{P}_{\gamma, \alpha}$} &   \multicolumn{4}{@{}c@{}}{$\mathcal{P}_{\gamma, \alpha, y, G}$} 
 \\ \cmidrule{1-1}\cmidrule{2-5}\cmidrule{6-9} %
&   \multicolumn{3}{@{}c@{}}{GMRES}&  \multicolumn{1}{@{}c@{}}{ Inner iter}&\multicolumn{3}{@{}c@{}}{GMRES}&  \multicolumn{1}{@{}c@{}}{ Inner iter}\\
\midrule
 Size  & $\mathrm{Iter}$ (CPU) & Err& Res & $\mathrm{Iter}_{pcg}$ &  $\mathrm{Iter}$ (CPU) & Err& Res & $\mathrm{Iter}_{pcg}$\\   
   \hline
   1926  &25(0.12) & 6.27e-04 & 1.71e-04& 10 &25(0.08) & 7.96e-05  & 1.29e-04& 10
   \\
    \hline
     7302  & 25(1.01) & 1.54e-03  & 3.23e-04& 18 &25(0.61) & 1.17e-05  & 1.09e-05& 18 \\
      \hline
     28420&25(7.88) & 5.66e-03   & 6.13e-04&34&25(2.95) & 1.19e-05   & 7.93e-06 & 34\\
     \hline
\end{tabular*}
\label{Table:2}
\end{minipage}
\end{center}
\end{table}
It can be observed from Tables $10$ and  $11$, that the  $\mathcal{P}_{\gamma, \alpha, x, G}^{GMRES}$ and $\mathcal{P}_{\gamma, \alpha, y, G}^{GMRES}$ 
 methods  have great advantage in the $\mathrm{CPU}$ compared with $\mathcal{P}_{\gamma, \alpha, x}^{GMRES}$ and $\mathcal{P}_{\gamma, \alpha, y}^{GMRES}$  methods, which shows with Approach II  the total timings are much smaller than in the case  of Approach I.  Although the results in Tables $10$ and $12$ indicates 
  applying $3\times 3$ strategy  and Algorithms $3$ and $4$ to solve the problem with several right-hand sides $(A+ \gamma B^T_{x} Q^{-1} B_{x})\mathcal{X}=\mathcal{H}$ or  $(A+ \gamma B^T_{y} Q^{-1} B_{y})\mathcal{X}=\mathcal{H}$,  need less computing time than using $2\times 2$ strategy with Algorithms $1$ and $2$. 
 \section*{In the case $\gamma=1e-02$ and $\alpha=1e+01$.}
\begin{table}[H]
\begin{center}
\begin{minipage}{\textwidth}
\caption{Results for GMRES in conjunction with preconditioners $\mathcal{P}_{\gamma, \alpha, x}$ and $\mathcal{P}_{\gamma, \alpha, x, G}$}
\begin{tabular*}{\textwidth}{@{\extracolsep{\fill}}lcccccccc@{\extracolsep{\fill}}}
\toprule
 & \multicolumn{4}{@{}c@{}}{Approach I} &   \multicolumn{4}{@{}c@{}}{Approach II} \\
 \hline
 & \multicolumn{4}{@{}c@{}}{$\mathcal{P}_{\gamma, \alpha, x}$} &   \multicolumn{4}{@{}c@{}}{$\mathcal{P}_{\gamma, \alpha, x, G}$} 
 \\ \cmidrule{1-1}\cmidrule{2-5}\cmidrule{6-9} %
&   \multicolumn{3}{@{}c@{}}{GMRES}&  \multicolumn{1}{@{}c@{}}{ Inner iter}&\multicolumn{3}{@{}c@{}}{GMRES}&  \multicolumn{1}{@{}c@{}}{ Inner iter}\\
\midrule
 Size  & $\mathrm{Iter}$ (CPU) & Err& Res & $\mathrm{Iter}_{pcg}$ &  $\mathrm{Iter}$ (CPU) & Err& Res & $\mathrm{Iter}_{pcg}$  \\   
   \hline
   1926  &39(0.11) & 4.96e-03 & 3.97e-02 & 10 &39(0.08)  &  4.96e-03 & 3.97e-02   & 10
   \\
    \hline
     7302  & 40(1.08) & 6.34e-03   & 5.62e-02   & 18 &40(0.81) & 6.34e-03  & 5.62e-02 & 18 \\
      \hline
     28420&43(7.68) & 8.24e-03   & 7.88e-02&34&43(5.05) & 8.24e-03   &  7.88e-02 & 34\\
     \hline
\end{tabular*}
\label{Table:2}
\end{minipage}
\end{center}
\end{table}
\begin{table}[H]
\begin{center}
\begin{minipage}{\textwidth}
\caption{Results for  2x2  and 3x3 strategies in conjunction with preconditioners  $\mathcal{P}_{\gamma, \alpha}$ and $\mathcal{P}_{\gamma, \alpha, x, G}$}
\begin{tabular*}{\textwidth}{@{\extracolsep{\fill}}lcccccccc@{\extracolsep{\fill}}}
\toprule
 & \multicolumn{4}{@{}c@{}}{$2\times2$ Strategy} &   \multicolumn{4}{@{}c@{}}{$3\times3$ Strategy} \\
 \hline
 & \multicolumn{4}{@{}c@{}}{$\mathcal{P}_{\gamma, \alpha}$} &   \multicolumn{4}{@{}c@{}}{$\mathcal{P}_{\gamma, \alpha, x, G}$} 
 \\ \cmidrule{1-1}\cmidrule{2-5}\cmidrule{6-9} %
&   \multicolumn{3}{@{}c@{}}{GMRES}&  \multicolumn{1}{@{}c@{}}{ Inner iter}&\multicolumn{3}{@{}c@{}}{GMRES}&  \multicolumn{1}{@{}c@{}}{ Inner iter}\\
\midrule
  Size & $\mathrm{Iter}$ (CPU) & Err& Res & $\mathrm{Iter}_{pcg}$ &  $\mathrm{Iter}$ (CPU) & Err& Res & $\mathrm{Iter}_{pcg}$\\   
   \hline
   1926 &25(0.19) & 7.27e-06 & 1.09e-05& 10 &25(0.08) & 7.96e-05  & 1.29e-04& 10
   \\
    \hline
     7302  & 25(1.11) & 1.17e-05  & 1.09e-05& 18 &25(0.61) & 1.17e-05  & 1.09e-05& 18 \\
      \hline
     28420&25(6.69) & 1.19e-05   & 7.93e-06&34&25(2.95) & 1.19e-05   & 7.93e-06 & 34\\
     \hline
\end{tabular*}
\label{Table:2}
\end{minipage}
\end{center}
\end{table}

\begin{table}[H]
\begin{center}
\begin{minipage}{\textwidth}
\caption{Results for GMRES in conjunction with preconditioners $\mathcal{P}_{\gamma, \alpha, y}$ and $\mathcal{P}_{\gamma, \alpha, y, G}$}
\begin{tabular*}{\textwidth}{@{\extracolsep{\fill}}lcccccccc@{\extracolsep{\fill}}}
\toprule
 & \multicolumn{4}{@{}c@{}}{Approach I} &   \multicolumn{4}{@{}c@{}}{Approach II} \\
 \hline
 & \multicolumn{4}{@{}c@{}}{$\mathcal{P}_{\gamma, \alpha, y}$} &   \multicolumn{4}{@{}c@{}}{$\mathcal{P}_{\gamma, \alpha, y, G}$} 
 \\ \cmidrule{1-1}\cmidrule{2-5}\cmidrule{6-9} %
&   \multicolumn{3}{@{}c@{}}{GMRES}&  \multicolumn{1}{@{}c@{}}{ Inner iter}&\multicolumn{3}{@{}c@{}}{GMRES}&  \multicolumn{1}{@{}c@{}}{ Inner iter}\\
\midrule
Size   & $\mathrm{Iter}$ (CPU) & Err& Res & $\mathrm{Iter}_{pcg}$ &  $\mathrm{Iter}$ (CPU) & Err& Res & $\mathrm{Iter}_{pcg}$\\   
   \hline
   1926  &37(0.13) &  4.94e-03 & 3.96e-02 & 10 &37(0.09) & 4.94e-03  & 3.96e-02& 10
   \\
    \hline
     7302  & 38(1.03) & 6.30e-03  & 5.60e-02    & 18 &38(0.80) & 6.30e-03  & 5.60e-02 & 18 \\
      \hline
     28420&39(7.19) &  8.20e-03    &  7.85e-02 &33&39(4.70) & 8.20e-03  & 7.85e-02 & 33\\
     \hline
\end{tabular*}
\label{Table:2}
\end{minipage}
\end{center}
\end{table}
\begin{table}[H]
\begin{center}
\begin{minipage}{\textwidth}
\caption{Results for  $2x2$  and 3x3 strategies in conjunction with preconditioners  $\mathcal{P}_{\gamma, \alpha}$ and $\mathcal{P}_{\gamma, \alpha, y, G}$}
\begin{tabular*}{\textwidth}{@{\extracolsep{\fill}}lcccccccc@{\extracolsep{\fill}}}
\toprule
 & \multicolumn{4}{@{}c@{}}{$2\times2$ Strategy} &   \multicolumn{4}{@{}c@{}}{$3\times3$ Strategy} \\
 \hline
 & \multicolumn{4}{@{}c@{}}{$\mathcal{P}_{\gamma, \alpha}$} &   \multicolumn{4}{@{}c@{}}{$\mathcal{P}_{\gamma, \alpha, y, G}$} 
 \\ \cmidrule{1-1}\cmidrule{2-5}\cmidrule{6-9} %
&   \multicolumn{3}{@{}c@{}}{GMRES}&  \multicolumn{1}{@{}c@{}}{ Inner iter}&\multicolumn{3}{@{}c@{}}{GMRES}&  \multicolumn{1}{@{}c@{}}{ Inner iter}\\
\midrule
  Size & $\mathrm{Iter}$ (CPU) & Err& Res & $\mathrm{Iter}_{pcg}$ &  $\mathrm{Iter}$ (CPU) & Err& Res & $\mathrm{Iter}_{pcg}$\\   
   \hline
   1926  &25(0.19) & 7.27e-06 & 1.09e-05& 10 &25(0.08) & 7.96e-05  & 1.29e-04& 10
   \\
    \hline
     7302  & 25(1.11) & 1.17e-05  & 1.09e-05& 18 &25(0.61) & 1.17e-05  & 1.09e-05& 18 \\
      \hline
     28420&25(6.69) & 1.19e-05   & 7.93e-06&34&25(2.95) & 1.19e-05   & 7.93e-06 & 34\\
     \hline
\end{tabular*}
\label{Table:2}
\end{minipage}
\end{center}
\end{table}
By comparing the results in Tables $13$, $14$, $15$ and $16$ it can be seen that our proposed strategy  incorporated with the 
preconditioned $\mathcal{P}_{\gamma, \alpha, x, G}^{GMRES}$ and $\mathcal{P}_{\gamma, \alpha, y, G}^{GMRES}$
 methods succeed in producing high-quality approximate solutions in
all cases, while the
 $3\times 3$ strategy incorporated with 
preconditioned $\mathcal{P}_{\gamma, \alpha,x,G}^{GMRES}$, $\mathcal{P}_{\gamma, \alpha,y,G}^{GMRES}$
methods, outperforms the classical $2\times 2$ strategy incorporated with 
preconditioned $\mathcal{P}_{\gamma, \alpha}^{GMRES}$ method, 
in terms of Iter and CPU times. Besides, numerical results in Tables  above show that the $3\times 3$ strategy incorporated with 
preconditioned $\mathcal{P}_{\gamma, \alpha,x,G}^{GMRES}$ and 
$\mathcal{P}_{\gamma, \alpha,y,G}^{GMRES}$
methods with
proper $\alpha$ and $\gamma$ is still very efficient  even for larger  size of problems.
\section{Conclusion}\label{conclusion}
In this paper, we  introduce a new class of augmented Lagrangian-preconditioners  based on global conjugate gradient (GCG) method for solving three-by-three linear systems, focusing on systems arising from finite element discretizations of the Stokes flow problem. Numerical experiments on a challenging $2D$ model problem indicate that the corresponding inexact preconditioner with $3\times3$ strategy can achieve significantly faster convergence compared to previous versions of the augmented Lagrangian-based preconditioner.
Future work will concentrate on replacing the incomplete Cholesky inner preconditioners with multilevel preconditioners to enhance the scalability of the global conjugate gradient and  needs to find an optimal
parameter to realize the fast convergence rate.
\bibliographystyle{elsarticle-num-names}

\end{document}